\definecolor{ForestGreen}{rgb}{0.15,0.416,0.18}
\definecolor{EgyptBlue}{rgb}{0.063,0.2,0.65}
\newtheorem{theorem}{Theorem}[section]
\newtheorem{corollary}[theorem]{Corollary}
\newtheorem{lemma}[theorem]{Lemma}
\newtheorem{proposition}[theorem]{Proposition}
\theoremstyle{definition}
\theoremstyle{definition}
\newtheorem{remark}[theorem]{Remark}
\theoremstyle{definition}
\numberwithin{equation}{section}
\numberwithin{table}{section}
\numberwithin{figure}{section}
\title{On prescribed energy saddle-point solutions to indefinite problems}
\author[1,2]{\textbf{Yavdat Il'yasov}}
\author[2]{\textbf{Edcarlos D. Silva}}
\author[2]{\textbf{Maxwell L. Silva}}
\affil[1]{Institute of Mathematics, Ufa Federal Research Centre, RAS,  450008, Ufa, Russia}
\affil[2]{Department of Mathematics, Federal University of Goi\'as 74001-970, Goi\^ania - GO, Brazil}
\newcommand{\doi}[1]{\url{https://doi.org/#1}}
\newcommand{\MR}[1]{\href{https://www.ams.org/mathscinet-getitem?mr=#1}{MR#1}}
\newcommand{\ZB}[1]{\href{https://zbmath.org/?q=an:#1}{Zbl~#1}}
\renewcommand{\maketitle}{\bgroup\setlength{\parindent}{0pt}
%
%

\vspace{1truecm}
\begin{center}{\vbox{\titlefont\@title}}\end{center}
\vspace{0.5truecm}
\begin{center}{\@author} \end{center}

\egroup
}
\renewcommand{\@fnsymbol}[1]{%
    \ifcase#1 \or {\,\Letter\!} \or\textasteriskcentered\or \textasteriskcentered\textasteriskcentered 
    \else\@ctrerr\fi}
\newcommand*{\titlefont}{\fontsize{18}{21.6}\selectfont\textbf}
\begin{document}

\maketitle

\pagestyle{plain}

\begin{center}
\noindent
\begin{minipage}{0.85\textwidth}\parindent=15.5pt

%

{\small{
\noindent {\bf Abstract.} A minimax variational principle for  saddle-point solutions with prescribed energy levels is introduced. The approach is based on the development of the linking theorem to the energy level nonlinear generalized Rayleigh quotients. An application to indefinite elliptic Dirichlet problems is presented. Among the
consequences, the existence of solutions with zero-energy levels is obtained. }
\smallskip

\noindent {\bf{Keywords:}} Indefinite problems, Linking Theorems, Rayleigh quotient.
\smallskip

\noindent{\bf{2020 Mathematics Subject Classification:}} 35G15, 35G20, 35G25, 35G30.
}

\end{minipage}
\end{center}

\section{Introduction}
The Mountain Pass Theorem  introduced by  Ambrosetti \& Rabinowitz \cite{AmbR}  and its generalization  as the  Benci \& Rabinowitz Linking Theorem \cite{BencRab} is a  powerful tool  to establish the existence of solutions for nonlinear differential problems of the variational form$$DE_\mu(u)=0,$$where $E_\mu(u)$ is an energy functional defined on a Banach space $W$. The solutions obtained by this method usually correspond to saddle critical points of the energy functional and are often referred to as mountain pass-type solutions or saddle-point (SP for short) solutions.   In essence, this method is topological, which makes it possible to use it for solving problems of very general forms. On the other hand, this generality often makes it difficult to  find out detailed information about the obtained solutions.

In the present note, we address the following question:\textit{ Is it possible to determine the level of the energy functional $E=E_\mu(\hat{u})$ for the saddle-point solution }$\hat{u}$? The question naturally appears in the theory of the mountain pass method itself \cite{CarvIlyaSant,motreanu,struwe}; moreover, the study of the energy levels of solutions is important in many applied problems (see e.g., \cite{Klinkhamer, Manton}).

We propose a method that allows to predict the energy level $E=E_\mu(u)$ of saddle-point solutions. We illustrate it by studying the following indefinite elliptic problem	\cite{BencRab}
\begin{align}\label{p}
		\begin{cases}
				& - \Delta u - \lambda u  =\mu |u|^{q-1}u+g(x,u)~~ \mbox{in}~ \Omega,\\
				&~~u=0~~\mbox{on}~~ \partial \Omega
\end{cases}
\end{align}
Here $\Omega$ is a bounded smooth domain in $\mathbb{R}^N$, $N\geq 1$,  $\lambda \in \mathbb{R}$ and  $\mu>0$, $1 < q < 2$.   
We assume that $g:\Omega \times \mathbb{R} \to \mathbb{R}$ is a Carath\'eodory function with primitive $G(x, u) =\int_0^u g(x, v) dv$ and 
\begin{description}
	\item[$(A^1)$] $g(x, 0) = 0$ and $\lim sup_{u\to 0} \frac{g(x,u)}{u}
\leq 0$, uniformly in $x \in \Omega$;
\item[$(A^2)$] $ \exists \gamma \in (2, 2^*)$, $C>0$ such that $|g(x, u)| \leq C
(1+|u|^{\gamma -1})$;
\item[$(A^3)$] $0<G(x,u)$  a.e. $x \in \Omega$, $\forall u \in \mathbb{R}$, \\
$\exists \alpha>2$, $R_0>0$ such that $\alpha G(x,u)\leq g(x,u)u$, for a.e. $x \in \Omega$, and  $|u|\geq R_0 $. 
\end{description}

The operator  $-\Delta$  with  Dirichlet boundary
conditions is known to define a self-adjoint operator in $L^2(\Omega)$ (see, e.g., \cite{edmund}) and its spectrum consists of an infinite sequence ordered  $0\!<\!\lambda_1\!<\!\lambda_2\!\leq\!  \ldots$ of eigenvalues repeated according to their finite multiplicity.  



By a weak solution of \eqref{p} we mean a critical point
$u \in  W^{1,2}_0(\Omega)$ of the energy functional  
\begin{equation*}
E_\mu(u)=\frac{1}{2} \left(\int |\nabla u|^2 dx -\lambda\int | u|^2 dx\right)- \frac{\mu}{q}\int | u|^q dx-\int G(x,u) dx,\;\;\;\; u \in W:= W^{1,2}_0(\Omega), 
\end{equation*}i.e., $DE_\mu(u)=0$.
 Here and subsequently,  $DE_\mu(u)$ denotes the Fr\'echet derivative of $E_{\mu}$  at $u \in W$ and $DE_\mu(u)(v)$ denotes its the directional derivative  in direction $v  \in W$.
 
In the case $\lambda >\lambda_1$, 
\eqref{p} is called \textit{indefinite problem}, since in this case the linear part of \eqref{p} is  indefinite and   $0$ is a saddle point of the functional $E_{\lambda,\mu}(u)$  \cite{BencRab, motreanu}.

We are interested in a \textit{prescribed energy solution} \cite{Ilyas21_JMS, Ilyas21} of \eqref{p}, i.e.,  $u_\mu \in W\setminus 0$  which for a given $E$ satisfies $
E_\mu(u_\mu)=E ~~\mbox{and}~~ DE_\mu(u_\mu)=0.
$
We seek for such solutions using the so called \textit{energy level Rayleigh quotient}\cite{Ilyas,Ilyas21_JMS, Ilyas21}   
\begin{equation}\label{R}
\mathcal{R}^{E}(u):=\frac{\frac{1}{2}\left(\int |\nabla u|^2 \,dx-\lambda \int | u|^2 \,dx\right)-\int G(x,u)\,dx-E}{\frac{1}{q}\int | u|^q dx}, \, \, u \in W\setminus \{0\},~ E \in \mathbb{R}.
\end{equation}
 Notice that for $u \in W\setminus \{0\},$
\begin{equation}\label{R1}
\begin{array}{cc}
	&E_{\mu}(u)=E\;\Leftrightarrow\;\mu=\mathcal{R}^{E}(u),\\
	&E_{\mu}(u)=E,~~DE_{\mu}(u)=0\;\;\Leftrightarrow\;\;DR^{E}(u)=0,~~\mu=\mathcal{R}^{E}(u).
\end{array}
\end{equation} 
Now, with the convention that $\lambda_0=-\infty,$  our main result is as follows\begin{theorem}\label{thm1}
 Assume that $1 < q < 2 < \gamma < 2^*$, $\lambda \in (\lambda_k, \lambda_{k + 1})$ and $(A^1)$-$(A^3)$ hold. Then there exists $E^k_\lambda>0$ such that  for any given $E\in ( 0, E^k_\lambda)$ corresponds $\mu^k_\lambda(E) \in (0, +\infty)$ such that \eqref{p} with $\mu=\mu^k_\lambda(E)$ possesses a weak solution  $u_{\mu^k_\lambda(E)}$ with energy value $E$, i.e., $DE_{\mu^k_\lambda(E)}(u_{\mu^k_\lambda(E)})=0$, $E_{\mu^k_\lambda(E)}(u_{\mu^k_\lambda(E)})=E$.

Furthermore, 
\begin{description}
	\item[(i)]
	$\mu^k_\lambda(E)$ nonincreasing function in $(0, E^k_\lambda)$;
	\item[(ii)] If $\lambda <\lambda_1$, there exists a limit $\lim_{E\to 0} \mu^0_\lambda(E)=\bar{\mu}_\lambda(0)<+\infty$ such that   \eqref{p} possesses a weak solution  $u_{\bar{\mu}_\lambda(0)}$ with zero energy value $E=0$ and $\mu=\bar{\mu}_\lambda(0)$.
\end{description}
\end{theorem}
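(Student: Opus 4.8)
The plan is to reformulate the prescribed–energy problem as a minimax problem for the energy level Rayleigh quotient $\mathcal{R}^{E}$ via the equivalences \eqref{R1}. For $\lambda\in(\lambda_k,\lambda_{k+1})$ we split $W=W^-\oplus W^+$, where $W^-$ is the span of the eigenfunctions $\varphi_1,\dots,\varphi_k$ (on which the quadratic form $\int|\nabla u|^2-\lambda\int u^2$ is negative definite) and $W^+$ is its orthogonal complement (on which it is positive definite). On this splitting we set up the standard Benci--Rabinowitz linking geometry: take $Q=\{u=u^-+t\varphi_{k+1}: u^-\in W^-,\ \|u^-\|\le r_0,\ 0\le t\le r_1\}$ with boundary $\partial Q$, and $S=\partial B_\rho\cap W^+$, which link. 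The key point is that, because $1<q<2<\gamma<2^*$ and $(A^1)$–$(A^3)$ hold, the functional $u\mapsto \tfrac12(\int|\nabla u|^2-\lambda\int u^2)-\int G(x,u)\,dx-E$ appearing in the numerator of $\mathcal{R}^{E}$ behaves like a linking functional: for $E>0$ small it is strictly negative near $0$, bounded above on $\partial Q$ by something nonpositive, and bounded below on $S$ by a positive constant. Since the denominator $\tfrac1q\int|u|^q$ is positive and $q$-homogeneous, one shows that the minimax value
\[
\mu^k_\lambda(E):=\inf_{h\in\Gamma}\ \sup_{u\in Q}\ \mathcal{R}^{E}\bigl(h(u)\bigr)
\]
over an appropriate class $\Gamma$ of admissible deformations is a well-defined positive real number, finite for all $E\in(0,E^k_\lambda)$ with $E^k_\lambda$ chosen so that the linking inequalities are strict.

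Next I would show that $\mu^k_\lambda(E)$ is a critical value of $\mathcal{R}^{E}$, i.e. there exists $u_{\mu^k_\lambda(E)}$ with $D\mathcal{R}^{E}(u_{\mu^k_\lambda(E)})=0$ and $\mathcal{R}^{E}(u_{\mu^k_\lambda(E)})=\mu^k_\lambda(E)$. This is where the verification of a Palais--Smale type condition for $\mathcal{R}^{E}$ at the level $\mu^k_\lambda(E)$ enters: one takes a PS sequence, uses $(A^3)$ (the Ambrosetti--Rabinowitz superquadraticity $\alpha G(x,u)\le g(x,u)u$ for $|u|\ge R_0$, $\alpha>2$) together with the subquadratic term controlled by $1<q<2$ to get boundedness in $W$, and then compactness of the Sobolev embeddings ($\gamma<2^*$) to extract a convergent subsequence. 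Crucially $D\mathcal{R}^{E}(u)=0$ is equivalent, by the quotient rule and \eqref{R1}, to $DE_\mu(u)=0$ with $\mu=\mathcal{R}^{E}(u)$, so the critical point of $\mathcal{R}^{E}$ is exactly a weak solution of \eqref{p} with prescribed energy $E$; this proves the main assertion. I expect the PS verification for $\mathcal{R}^{E}$ — rather than for $E_\mu$ itself — to be the main technical obstacle, since the Rayleigh quotient is not a standard $C^1$ functional with a nice gradient flow and one must either work with the equivalent reformulation $\{E_\mu(u)=E,\ \mu\text{ fixed}\}$ on the appropriate constraint or justify a deformation lemma directly for $\mathcal{R}^{E}$.

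For the monotonicity assertion (i), I would exploit the explicit $E$-dependence of $\mathcal{R}^{E}$: from \eqref{R} one sees that $E\mapsto\mathcal{R}^{E}(u)$ is, for each fixed $u$ with $\int|u|^q>0$, strictly decreasing and affine in $E$ (the numerator decreases by $E$, the positive denominator is independent of $E$). Passing this pointwise monotonicity through the $\inf\sup$ in the definition of $\mu^k_\lambda(E)$ gives that $\mu^k_\lambda(\cdot)$ is nonincreasing on $(0,E^k_\lambda)$. For (ii), in the definite case $\lambda<\lambda_1$ one has $k=0$, $W^-=\{0\}$, and the linking reduces to a genuine mountain pass geometry for the numerator functional; the point is that now the construction survives down to $E=0$ because the quadratic form is positive definite and $0$ is a strict local minimum of the numerator even when $E=0$ (using $(A^1)$ and $1<q<2$ so that the $|u|^q$ term does not destroy coercivity near the origin on the relevant cone). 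Hence $\mu^0_\lambda(E)$ stays bounded as $E\to0^+$ by comparison with a fixed admissible path, the monotone bounded limit $\bar\mu_\lambda(0)=\lim_{E\to0}\mu^0_\lambda(E)<+\infty$ exists, and a limiting/compactness argument (again using the PS condition, now at $E=0$) produces a solution $u_{\bar\mu_\lambda(0)}$ with $E_{\bar\mu_\lambda(0)}(u_{\bar\mu_\lambda(0)})=0$. The delicate point in (ii) is ensuring the minimax level does not collapse to $0$ or escape to $+\infty$ as $E\to0$, which is handled by the uniform (in $E$, for $E$ near $0$) lower bound on $S$ and upper bound on $\partial Q$ established in the first step.
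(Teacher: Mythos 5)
Your overall route---recasting the prescribed-energy problem as a minimax for the Rayleigh quotient $\mathcal{R}^E$ over a Benci--Rabinowitz linking geometry on $W=W^-\oplus W^+$, verifying a compactness condition for $\mathcal{R}^E$, reading off the solution from the equivalence $D\mathcal{R}^E(u)=0$, $\mu=\mathcal{R}^E(u)$, proving (i) by pushing the pointwise monotonicity in $E$ through the inf--sup, and obtaining (ii) by a monotone-limit-plus-compactness argument as $E\to 0^+$---is essentially the paper's. The genuine gap is exactly the point you flag and then leave unresolved: $\mathcal{R}^E$ is not defined at the origin (indeed $\mathcal{R}^E(u)\to-\infty$ as $u\to 0$ when $E>0$) and is only of class $C^1$ on $W\setminus\{0\}$, while your linking set $Q$ contains the origin ($u^-=0$, $t=0$) and, in case (ii), every admissible path has $\gamma(0)=0$; the linking/mountain-pass theorems and the underlying deformation lemma you invoke require a functional in $C^1(W,\mathbb{R})$, so the minimax $\inf_h\sup_{Q}\mathcal{R}^E(h(u))$ cannot be handled by those theorems as written, and your claim that it is a critical value is not justified. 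The paper's missing ingredient is a truncation: with a cutoff $\phi_\rho$ vanishing near $0$ one works with $\mathcal{R}^E_\rho(u)=\phi_\rho(\|u\|_1)\,\mathcal{R}^E(u)\in C^1(W)$, and the crucial observation---which uses $E>0$---is that $\mathcal{R}^E<0$ on a small ball around the origin, so the truncation only modifies the functional where it is negative; hence critical points of $\mathcal{R}^E_\rho$ at positive levels are genuine critical points of $\mathcal{R}^E$, and the Cerami condition at positive levels transfers between the two functionals. Without this device (or a worked-out substitute, e.g. a deformation argument on $W\setminus\{0\}$ with control near the singularity), the central existence step of your proof is incomplete; note also that the paper runs the compactness argument with the Cerami condition and a linking theorem adapted to it, rather than the plain Palais--Smale condition you propose.

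A secondary inaccuracy concerns (ii): you suggest the construction ``survives down to $E=0$'' because $0$ is a strict local minimum of the numerator, but at $E=0$ the negativity of the quotient near the origin fails ($\mathcal{R}^0(u)\to 0$ as $\|u\|_1\to 0$), so the truncation device is unavailable there and the minimax at $E=0$ need not be attained directly. The zero-energy solution has to be produced, as you also indicate and as the paper does, solely by the limiting argument: the critical points $u_{\mu^0_\lambda(E_m)}$, with levels decreasing to $\bar{\mu}_\lambda(0)>0$, form (approximate) Cerami sequences for which the compactness proposition applies; the limit is nonzero because the norms stay above the truncation radius and $|u_m|_{L^q}$ stays bounded away from zero, and passing to the limit in the levels gives energy $0$ with $\mu=\bar{\mu}_\lambda(0)$.
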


To find solutions of \eqref{p}, we apply Mountain Pass Theorem to the energy level Rayleigh quotient $\mathcal{R}^{E}(u)$. Note that $\mathcal{R}^{E} \in C^{1}(W \setminus \{0\}, \mathbb{R})$.  However for the application of the Mountain Pass Theorem, in general, it is required that	the functional belongs to $C^{1}(W, \mathbb{R})$. 
Below we overcome this difficulty by using an appropriate truncation function for $\mathcal{R}^{E}$ which can be properly introduced in the case $E>0$. In the zero-energy case $E=0$, the solution is obtained  by passing to the limit $E\to 0$.

\begin{remark} The zero-energy case $E=0$ is particularly interesting since for linear problems (such as $Lu =\lambda u$, where $L$  is  a self-adjoint
 linear operator
on a Hilbert space $H=W$), any isolated spectral point $\lambda_n$, $n=1,2,\ldots$,  corresponds to an eigenfunction $\phi_n$ of the zero-energy    level, i.\,e., $E=\frac{1}{2}\left\langle  \phi_n,L \phi_n\right\rangle -\lambda_n\frac{1}{2}\left\langle \phi_n,\phi_n\right\rangle=0$.
\end{remark}

\begin{remark} 

The weak solution $u=u_{\mu}\in W^{1,2}_0(\Omega)$ satisfies $-\Delta u= a(x)(1+|u|),$ where $a(x)\in L^{\frac{N}{2}}(\Omega),$ due $(A^1)\!-\!(A^2).$ This yields by elliptic regularity that  $u\in C^{1,\gamma}(\Omega)$(see \cite{struwe} Appendix B). Moreover, if $g$ is Holder continuous, then $u\in C^{2}(\Omega).$

\end{remark}
\begin{remark}
	Observe that $\mu |u|^{q-1}u+g(x,u)$  does not satisfies the Ambrosetti Rabinowitz condition $(A^3)$  \cite{AmbR}. 
\end{remark}
\subsection{Notation}
We shall use the following notations:\\

\noindent i) $||\cdot||_W$ is the norm on the Sobolev Space $W:= W^{1,2}_{0}(\Omega)$ induced by the usual inner product;

\noindent ii) $|u|_{L^r}:=(\int_\Omega |u|^r\, dx)^{1/r}$, $1\leq r<+\infty,$ is the norm on the Lebesgue space $L^r:=L^r(\Omega);$

\noindent iii) $S_{p}$ is the best Sobolev constant for the embedding $W_{0}^{1,2}(\Omega)\subset L^p(\Omega)$, $1\leq p\leq 2^*$;

\noindent iv) $\|\cdot\|_*$ denotes the norm in the dual space $W^*$, 

\noindent v) $d(A,B):=\min\{\|u-v\|_W: ~u \in A,~v \in B\}$ denotes the distance between sets $A,B \in W$.




\subsection{Outline}
The present work is organized as follows. In the forthcoming section we consider the nonlinear Rayleigh quotient for our problem together with its behavior near origin. In Section 3 we prove that the Cerami condition for a suitable functional is verified. Section 4 is devoted to the prove that function given by the Rayleigh quotient is bounded from below. In Section 5 we prove our main results by using the nonlinear Rayleigh quotient together a fine analysis on the associated fibering maps.


\section{Preliminaries}


Let   $(e_k) \subset W$ be the orthogonal basis of the eigenfunctions of $(-\Delta,  W^{1,2}_0(\Omega))$ with zero Dirichlet conditions satisfying $\| e_k\|_W^2 = \lambda_k$ and $|e_k|_{L^2}^2 = 1,$ for each $k \in \mathbb{N}.$  Assume
$\lambda \in (\lambda_k, \lambda_{k+1}),$ fixed for some $k\in\mathbb{N}.$ We may write 
$$
W\;\;=\;\; W^{+}\oplus W^-,\;\;\;\text{where}\;\;\;W^{-}= span \{e_1,e_2, \ldots,e_k\}\;\;\;\;\;  and\;\;\;\;\;W^{+}=\overline{span\{e_{k + 1},e_{k+2},\ldots\}}.
$$	
Then one can use the following equivalent norm $\|\cdot\|_1$ to $\|\cdot\|_W$ in $W$ 
$$
	\|u\|^2_1=\sum_{i=k+1}^\infty (\lambda_i-\lambda) u_i^2+\sum_{i=1}^k(\lambda-\lambda_i) u_1^2 := \|u^+\|_1^2+\|u^-\|_1^2.
$$
where $u_i \;=\;( u,e_k )$. Then for any $u=(u^+ + u^-) \in W,\;u^{\pm}\in W^{\pm}$, $c_0 \|u\|_1^2 \leq \|u\|_W^2 \leq c_1\|u\|_1^2$, where $0<c_0,c_1<+\infty$  do not depend on $u \in W$, and	
\begin{align*}
	H_{\lambda}(u):=&\|u\|_W^2-\lambda|u|_{L^2}^2\;=\;\int |\nabla u|^2 dx -\lambda\int | u|^2 dx\;\;=\\
	&H_{\lambda}(u^+)+H_{\lambda}(u^-)\;\;=\;\;\|u^+\|_1^2-\|u^-\|_1^2,~u \in W.
\end{align*}
Notice that $H_\lambda(u)=-\|u\|_1^2<0$ if $u \in W^{-} \setminus \{0\}$, and  $H_\lambda(u)=\|u\|_1^2>0$ if $u \in W^{+} \setminus \{0\}$, for $\lambda \in (\lambda_k,, \lambda_{k+1}).$ 

In these notations, we have 
\begin{align*}
E_\mu(u)=&\frac{1}{2}H_\lambda(u)- \frac{\mu}{q} |u|_{L^q}^q-\int G(x,u)dx,~~\\
\mathcal{R}^{E}(u)=&\frac{\frac{1}{2}H_\lambda(u)-\int G(x,u)dx-E}{\frac{1}{q}|u|_{L^q}^q},~~ u \in W\setminus \{0\}.
\end{align*}

Obviously, $\mathcal{R}^E \in C^{1}(W \setminus \{0\}, \mathbb{R})$ and 
\begin{equation}\label{R1}
DE_\mu(u)=0~\mbox{and}~~E_\mu(u)=E\,\;\;\;\;\Leftrightarrow\;\;\;\;\;D\mathcal{R}^{E}(u)=0~\mbox{and}~~\mu=\mathcal{R}^{E}(u),~~ u \in W\setminus 0.
\end{equation} 
 To avoid the singularity at origin of $\mathcal{R}^E$, we define $\phi_\rho\in C^{\infty}( \mathbb{R},[0,1])$, for  $\rho>0$  such that 
$$
\left\{\begin{array}{l}\phi_\rho(s)=0\;\;\text{if}\;\;|s|<\rho/2\\\phi_\rho(s)=1\;\;\text{if}\;\;|s|>\rho\end{array}\right.
$$
and introduce
$$
\mathcal{R}^E_\rho(u)=\left\{\begin{aligned}
	&~\phi_\rho(\|u\|_1) \mathcal{R}^E(u), ~u \in  W\setminus 0\\
	&~0,~~ u=0
\end{aligned} \right.
$$
Thus, 	$\mathcal{R}^E_\rho(u) \in C^1(W)$ for any $\rho>0$. 

 We need also the following estimate on the sign of energy near the origin
using the ball  $B_r:=\{u \in W:~\|u\|_1\leq r\},$ $r>0.$ 
\begin{lemma}\label{lemNeg} Assume that $E>0$ and $\lambda \in (\lambda_k,, \lambda_{k+1}).$
Then there exists $\rho(E)>0$ such that $\mathcal{R}^E(u)<0$ for $u \in B_\rho$ with $0<\rho<\rho(E).$  
\end{lemma}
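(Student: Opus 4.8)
The plan is to reduce the claim to a sign statement about the numerator of $\mathcal{R}^E$. Since $\frac{1}{q}|u|_{L^q}^q>0$ for every $u\in W\setminus\{0\}$, the quotient $\mathcal{R}^E(u)$ has the same sign as
\[
N(u):=\tfrac12 H_\lambda(u)-\int_\Omega G(x,u)\,dx-E ,
\]
so it suffices to exhibit $\rho(E)>0$ with $N(u)<0$ for all $u\in B_\rho\setminus\{0\}$ whenever $0<\rho<\rho(E)$.

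First I would use the orthogonal splitting from the Preliminaries: $H_\lambda(u)=\|u^+\|_1^2-\|u^-\|_1^2\le\|u^+\|_1^2\le\|u\|_1^2$, so that $\tfrac12 H_\lambda(u)\le \tfrac12\|u\|_1^2\le\tfrac12\rho^2$ for $u\in B_\rho$. Next I would invoke $(A^3)$, which gives $G(x,u)>0$ a.e.\ in $\Omega$; hence $\int_\Omega G(x,u)\,dx\ge 0$ (indeed $>0$ for $u\neq 0$), so the term $-\int_\Omega G(x,u)\,dx$ only helps. Combining the two estimates yields $N(u)\le \tfrac12\rho^2-E$ for every $u\in B_\rho$.

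It then remains to choose $\rho(E):=\sqrt{2E}$. For any $\rho<\rho(E)$ and any $u\in B_\rho\setminus\{0\}$ we obtain $N(u)\le \tfrac12\rho^2-E<\tfrac12(2E)-E=0$, and dividing by the positive quantity $\tfrac1q|u|_{L^q}^q$ gives $\mathcal{R}^E(u)<0$, which is the assertion.

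There is no real obstacle here; the only points to be careful about are that the inequality is claimed on the \emph{punctured} ball $B_\rho\setminus\{0\}$ (where $\mathcal{R}^E$ is actually defined), and that the bound $H_\lambda(u)\le\|u\|_1^2$ comes from the definiteness of the equivalent norm on $W^+\oplus W^-$ rather than from any smallness of $u$. Note that only the positivity of $G$ from $(A^3)$ is used; $(A^1)$ and $(A^2)$ are not needed for this lemma (alternatively one could avoid $G>0$ by estimating $\int_\Omega G(x,u)\,dx\le\frac{\varepsilon}{2}|u|_{L^2}^2+C_\varepsilon|u|_{L^\gamma}^\gamma$ via $(A^1)$--$(A^2)$ and absorbing both terms for $\rho$ small, but this is superfluous here).
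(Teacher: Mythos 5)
Your proposal is correct and follows essentially the same route as the paper: drop the term $-\int_\Omega G(x,u)\,dx\le 0$ using the positivity of $G$ from $(A^3)$, bound $\tfrac12 H_\lambda(u)\le\tfrac12\|u\|_1^2\le\tfrac12\rho^2$, and choose $\rho(E)$ (here $\sqrt{2E}$) so that the numerator stays below $0$; your version merely makes explicit the constant that the paper leaves as $C$ and the (harmless) exclusion of $u=0$.
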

\begin{proof}
	 Proof follows from the fact that by $(A^3)$,
	\begin{equation*}\label{R}
	\mathcal{R}^{E}(u)< q\frac{1}{|u|_{L^q}^q}\left(\frac{1}{2}H_\lambda(u)-C\right)< \frac{q}{|u|_{L^q}^q}\left(\frac{1}{2}\|u\|_1^2-C\right), \, \, u \in W \setminus \{0\}.
	\end{equation*}
	where $C \in (0,+\infty)$ does not depend on $u \in W$.
\end{proof}

\begin{corollary}\label{rho}
	Assume that $\rho<\rho(E)$. If $\hat{u} $ is a critical point of $\mathcal{R}^E_\rho(u)$ such that $\mathcal{R}^E_\rho(\hat{u})>0$, then $u $ is  a critical point of $\mathcal{R}^E(u)$ as well.
\end{corollary}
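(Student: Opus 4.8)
The plan is to show that the positivity hypothesis forces $\hat{u}$ to lie in the region $\{\|u\|_1>\rho\}$, where the truncation $\phi_\rho$ is identically $1$; there $\mathcal{R}^{E}_\rho$ and $\mathcal{R}^{E}$ coincide, so they have the same Fr\'echet derivative at $\hat{u}$.

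First I would record two elementary consequences of $\mathcal{R}^{E}_\rho(\hat{u})>0$. Since $\mathcal{R}^{E}_\rho(0)=0$, we have $\hat{u}\neq 0$, so $\mathcal{R}^{E}(\hat{u})$ is well defined and $\mathcal{R}^{E}_\rho(\hat{u})=\phi_\rho(\|\hat{u}\|_1)\,\mathcal{R}^{E}(\hat{u})$. Because $\phi_\rho\geq 0$, the product being strictly positive yields $\phi_\rho(\|\hat{u}\|_1)>0$ and $\mathcal{R}^{E}(\hat{u})>0$; in particular $\|\hat{u}\|_1>\rho/2$.

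The key step is to upgrade this to $\|\hat{u}\|_1>\rho$. Suppose, for contradiction, that $\rho/2<\|\hat{u}\|_1\leq\rho$. Then $\hat{u}\in B_\rho\setminus\{0\}$ with $0<\rho<\rho(E)$, so Lemma \ref{lemNeg} gives $\mathcal{R}^{E}(\hat{u})<0$, contradicting the previous paragraph. Hence $\|\hat{u}\|_1>\rho$. It is exactly the hypothesis $\mathcal{R}^{E}_\rho(\hat{u})>0$ that rules out the transition layer $\rho/2\le\|u\|_1\le\rho$, where $\mathcal{R}^{E}_\rho$ and $\mathcal{R}^{E}$ may differ; this is the only place any real content enters.

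To finish, I would use continuity of $u\mapsto\|u\|_1$: there is an open neighborhood $U$ of $\hat{u}$ on which $\|u\|_1>\rho$, hence $\phi_\rho(\|u\|_1)=1$ and $\mathcal{R}^{E}_\rho(u)=\mathcal{R}^{E}(u)$ for all $u\in U$. Since $\hat{u}\neq 0$, both functionals are $C^1$ on $U$, and therefore $D\mathcal{R}^{E}(\hat{u})=D\mathcal{R}^{E}_\rho(\hat{u})=0$, i.e.\ $\hat{u}$ is a critical point of $\mathcal{R}^{E}$. There is no serious obstacle in this argument; the only subtlety worth flagging is that one cannot conclude anything about $\mathcal{R}^{E}_\rho$ versus $\mathcal{R}^{E}$ inside the annulus $\rho/2\le\|u\|_1\le\rho$, which is precisely why the sign estimate of Lemma \ref{lemNeg} (valid for every $\rho<\rho(E)$, in particular for the given $\rho$) must be invoked.
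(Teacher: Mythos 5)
Your proof is correct and follows essentially the same route as the paper: Lemma \ref{lemNeg} forces the critical point out of the truncation region, so $\mathcal{R}^E_\rho$ and $\mathcal{R}^E$ coincide near $\hat{u}$ and have the same derivative there. You are in fact slightly more careful than the paper's one-line argument, since you derive the strict bound $\|\hat{u}\|_1>\rho$ (the paper only records $\|\hat{u}\|\geq\rho$), which is what legitimately gives coincidence of the two functionals on an open neighborhood and hence equality of their Fr\'echet derivatives at $\hat{u}$.
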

\begin{proof}
By Lemma \ref{lemNeg}, $\mathcal{R}^E_\rho(\hat{u})>0$ implies that $\|\hat{u}\|_W\geq \rho$. Therefore $\mathcal{R}^E_\rho(\hat{u})=\mathcal{R}^E(\hat{u})=\mu$ and $D\mathcal{R}^E(\hat{u})=0$. 
\end{proof}
 We say that a sequence $(u_{n}) \subset W$ is a Cerami sequence at the level $c \in \mathbb{R}$ of $\mathcal{R}^E$, in short $(Ce)_{c}$ sequence, whenever $\mathcal{R}^E(u_{n}) \rightarrow c$ and $(1 + \|u_n\|_W) \|D\mathcal{R}^E(u_{n})\| \rightarrow 0$ as $n \rightarrow \infty$. The functional $\mathcal{R}^E$ satisfies the Cerami condition at the level $c \in \mathbb{R}$, in short $(Ce)_{c}$ condition, whenever any $(Ce)_c$ sequence possesses a convergent subsequence. 
The definitions of the $(Ce)_{c}$ sequence and the $(Ce)_{c}$ condition for $\mathcal{R}^E_\rho$ are similar.
\begin{corollary}\label{Cc}
If $\rho<\rho(E)$,  then $\mathcal{R}^E_\rho(u)$ satisfies the $(Ce)$-condition (Cerami condition) at the level $c\in \mathbb{R}^+,$ if and only if $\mathcal{R}^E(u)$ does.
\end{corollary}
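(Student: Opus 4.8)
The plan is to exploit the fact that the truncation factor $\phi_\rho$ is identically $1$ on the part of $W$ where a Cerami sequence at a positive level must live, so that there $\mathcal{R}^E_\rho$, $\mathcal{R}^E$ and their derivatives all coincide. Set $\mathcal{O}_\rho:=\{u\in W:\ \|u\|_1>\rho\}$. Since $\phi_\rho(s)=1$ for $|s|>\rho$, we have $\mathcal{R}^E_\rho\equiv\mathcal{R}^E$ on the \emph{open} set $\mathcal{O}_\rho$, and hence $D\mathcal{R}^E_\rho(u)=D\mathcal{R}^E(u)$ for every $u\in\mathcal{O}_\rho$ (Fr\'echet differentiability being a local property). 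Consequently it suffices to show that, for $c>0$, every Cerami sequence of \emph{either} functional at level $c$ is eventually contained in $\mathcal{O}_\rho$.

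To this end I would first use Lemma \ref{lemNeg} in contrapositive form: since $\rho<\rho(E)$, every $u\in W\setminus\{0\}$ with $\mathcal{R}^E(u)\ge 0$ satisfies $\|u\|_1\ge\rho(E)>\rho$, i.e.\ $u\in\mathcal{O}_\rho$. Now let $(u_n)$ be a $(Ce)_c$ sequence for $\mathcal{R}^E_\rho$ with $c>0$. For all large $n$ one has $\mathcal{R}^E_\rho(u_n)>c/2>0$; writing $\mathcal{R}^E_\rho(u_n)=\phi_\rho(\|u_n\|_1)\,\mathcal{R}^E(u_n)$ with $0\le\phi_\rho\le 1$ forces both factors to be strictly positive, hence $\mathcal{R}^E(u_n)>0$, so $u_n\in\mathcal{O}_\rho$ by the previous remark. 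The same computation applied to a $(Ce)_c$ sequence of $\mathcal{R}^E$ (where $\mathcal{R}^E(u_n)\to c>0$ gives $\mathcal{R}^E(u_n)>0$ for large $n$ directly) shows it too is eventually contained in $\mathcal{O}_\rho$.

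Finally, on $\mathcal{O}_\rho$ the values $\mathcal{R}^E_\rho=\mathcal{R}^E$, the derivatives $D\mathcal{R}^E_\rho=D\mathcal{R}^E$, and the Cerami weight $1+\|u_n\|_W$ is the same for the two functionals, so the tail of a $(Ce)_c$ sequence of one functional is literally a $(Ce)_c$ sequence of the other. Since the $(Ce)_c$ condition only requires extraction of a convergent subsequence — a property of the tail — it holds for $\mathcal{R}^E_\rho$ at a given $c>0$ if and only if it holds for $\mathcal{R}^E$, which is the claim. I do not expect a genuine obstacle: the one implication deserving a line of care is "$\mathcal{R}^E_\rho\equiv\mathcal{R}^E$ on $\mathcal{O}_\rho$" $\Rightarrow$ "$D\mathcal{R}^E_\rho=D\mathcal{R}^E$ on $\mathcal{O}_\rho$", which uses only that $\mathcal{O}_\rho$ is open; everything else is bookkeeping against the definitions of a Cerami sequence and of $\phi_\rho$.
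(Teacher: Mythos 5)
Your proposal is correct and follows essentially the same route as the paper: invoke Lemma \ref{lemNeg} to force any $(Ce)_c$ sequence with $c>0$ outside $B_\rho$, where $\mathcal{R}^E_\rho$ and $\mathcal{R}^E$ (and hence their derivatives) coincide, so Cerami sequences and their convergent subsequences transfer in both directions. Your explicit remark that the sequence lands strictly inside the open set $\{\|u\|_1>\rho\}$, making the derivative identity a purely local statement, is a slightly more careful rendering of the same argument.
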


\begin{proof}
Assume $\mathcal{R}^E_\rho(u)$ satisfies the $(Ce)$-condition at $c>0$. 
Let $(u_n)$ be a $(Ce)$ sequence for $\mathcal{R}^E_\rho$ at $c$, i.e., $\mathcal{R}^E_\rho(u_{n}) \rightarrow c$ and $(1 + \|u_n\|_1) \|D\mathcal{R}^E_\rho(u_{n})\|_{\ast} \rightarrow 0$  as $n \rightarrow \infty$. 
	By Lemma \ref{lemNeg}, $\mathcal{R}^E_\rho(u)\leq 0$ as $u \in B_\rho$, whereas $\mathcal{R}^E_\rho(u)=\mathcal{R}^E(u)$ for $u \in W\setminus B_\rho$. Hence $\mathcal{R}^E_\rho(u)>0$ implies $\mathcal{R}^E(u)=\mathcal{R}^E_\rho(u)$ and $(\mathcal{R}^E)'(u)=(\mathcal{R}^E_\rho)'(u)$. Therefore $(u_n)$ is also $(Ce)_{c}$ sequence for $\mathcal{R}^E$. The proof of opposite statement is similar. 
\end{proof}

\section{On the properties of  $\mathcal{R}^E_\rho$}
Consider the sphere $S_r^\pm:=\{u \in W^\pm: \|u\|_1=r\}$, $r>0.$ 
Observe that for any given $\epsilon>0, $ $(A^1)\!\!-\!\!(A^2)$ imply the existense of $C(\epsilon)>0$ such that
\begin{equation}\label{CrescimentoG}
	G(x,s) \leq \frac{\epsilon}{2} |s|^2+C(\epsilon)|s|^\gamma,\;\;\;\;\;\;~~\forall s \in \mathbb{R}~\mbox{and a.e.}~x \in \Omega.
\end{equation}
This by the Sobolev inequalities implies
$$
\int G(x,u) dx \leq \frac{\epsilon}{2} |u|^2_{L^2}+C(\epsilon)|u|^\gamma_{L^\gamma}\leq 
\frac{\epsilon}{2} C_1\|u\|^2_1+C_2\|u\|^\gamma_1,~~u \in W,
$$
where $C_0, C_2 \in (0,+\infty)$ do not depend on $u \in W$.
 
\begin{proposition}\label{PositRHO} For any  $\lambda \in (\lambda_k, \lambda_{k+1})$,  there exist $E^k_{\lambda}>0$ and $r_\lambda^k>0$ such that 
$\inf\limits_{w\in S_{r_\lambda^k}^+}\mathcal{R}^E_\rho(w)>0$,  for any $E\in [0,E^k_\lambda)$, $\forall \rho \in (0,r_\lambda^k)$. 
\end{proposition}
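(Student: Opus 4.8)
The plan is to estimate $\mathcal{R}^E_\rho$ on the sphere $S^+_{r}$ for $w \in W^+$ directly, using that on $W^+$ the quadratic part $H_\lambda$ is positive definite, namely $H_\lambda(w) = \|w\|_1^2$. For such $w$ with $\|w\|_1 = r$ and $r$ large enough that $\phi_\rho(r) = 1$ (which holds as soon as $\rho < r$), we have $\mathcal{R}^E_\rho(w) = \mathcal{R}^E(w)$, so it suffices to bound the numerator of $\mathcal{R}^E$ from below by a positive quantity and note the denominator $\frac1q|w|_{L^q}^q$ is positive. Using the growth estimate \eqref{CrescimentoG} together with the Sobolev embeddings, on $S^+_r$ the numerator satisfies
\[
\tfrac12 H_\lambda(w) - \int G(x,w)\,dx - E \;\geq\; \tfrac12 r^2 - \tfrac{\epsilon}{2}C_1 r^2 - C_2 r^\gamma - E.
\]
First I would fix $\epsilon$ small so that $\tfrac12 - \tfrac{\epsilon}{2}C_1 \geq \tfrac14$, giving the lower bound $\tfrac14 r^2 - C_2 r^\gamma - E$. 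Since $\gamma > 2$, the function $r \mapsto \tfrac14 r^2 - C_2 r^\gamma$ is positive on an interval $(0, r_0)$ with a strictly positive maximum; choose $r_\lambda^k \in (0,r_0)$ realizing a value $m_\lambda^k := \tfrac14 (r_\lambda^k)^2 - C_2 (r_\lambda^k)^\gamma > 0$, and then set $E_\lambda^k := m_\lambda^k$ (or $m_\lambda^k/2$, for a strict inequality with room to spare). For any $E \in [0, E_\lambda^k)$ we then get numerator $\geq m_\lambda^k - E > 0$ uniformly over $w \in S^+_{r_\lambda^k}$.

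It remains to convert this numerator bound into a uniform positive lower bound for the quotient, i.e. to control the denominator $\tfrac1q|w|_{L^q}^q$ from above on $S^+_{r_\lambda^k}$. This is immediate: by the Sobolev (or Hölder) inequality $|w|_{L^q} \leq S_q^{-1}\|w\|_W \leq C\|w\|_1$, so on the sphere $\tfrac1q|w|_{L^q}^q \leq \tfrac{C^q}{q}(r_\lambda^k)^q =: M_\lambda^k < +\infty$, a constant independent of $w$. Hence
\[
\inf_{w \in S^+_{r_\lambda^k}} \mathcal{R}^E_\rho(w) \;=\; \inf_{w \in S^+_{r_\lambda^k}} \mathcal{R}^E(w) \;\geq\; \frac{m_\lambda^k - E}{M_\lambda^k} \;>\; 0,
\]
valid for every $\rho \in (0, r_\lambda^k)$ since the truncation factor is identically $1$ there. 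I don't expect a serious obstacle here; the only points requiring mild care are keeping the constants $C_1, C_2$ (from \eqref{CrescimentoG} and the embeddings) independent of $w$ and $E$, and making the choice of $r_\lambda^k$ and $E_\lambda^k$ in the right order — $\epsilon$ first, then $r_\lambda^k$ from the geometry of $\tfrac14 r^2 - C_2 r^\gamma$, then $E_\lambda^k$ — so that all the estimates are genuinely uniform over the stated ranges of $E$ and $\rho$.
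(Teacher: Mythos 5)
Your proposal is correct and follows essentially the same route as the paper: use $H_\lambda(w)=\|w\|_1^2$ on $W^+$, the growth bound \eqref{CrescimentoG} with $\epsilon$ fixed small, the one-variable function $r\mapsto c\,r^2-C_2r^\gamma$ to pick $r_\lambda^k$ and $E_\lambda^k$, the Sobolev bound $|w|_{L^q}^q\le S_q^q(r_\lambda^k)^q$ for the denominator, and the observation that $\phi_\rho\equiv 1$ on $S^+_{r_\lambda^k}$ when $\rho<r_\lambda^k$. The only cosmetic difference is that the paper takes $r_\lambda^k$ to be the exact maximizer of $f(r)=\frac12(1-C_1\epsilon)r^2-C_2r^\gamma$ and sets $E_\lambda^k=f(r_\lambda^k)$, while you take any point where the function is positive, which is equally valid.
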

\begin{proof} Note that  
$H_\lambda(w)\!=\|w\|_1^2$, $\forall w\in W^+$.
Take $\epsilon \in (0,1/C_1)$. Then by the Sobolev inequality and $(\ref{CrescimentoG})$ we have 
	$$
	\mathcal{R}^E(w)\;\;\geq\;\;q\frac{\frac{1}{2}\left(1-C_1\epsilon\right)\|w\|_1^2\;\;-\;\;C_2\|w\|_1^{\gamma}-E}{|w|_{L^q}^q}\;\;=\;\;q\frac{f\left(\|w\|_1\right)-E}{|w|_{L^q}^q}, ~~\forall w\in W^+,
	$$ 
where $\displaystyle{f(r)\!:=\!\frac{1}{2}(1-C_1\epsilon)r^2-C_2r^{\gamma}}$, and $C_1,C_2 \in (0,+\infty)$ do not depend on $u \in W$. Observe that  $f(r)$ attains its global maximum 
	$$
	E_\lambda^k:=f(r_\lambda^k)~~\mbox{ at }~~ r_\lambda^k:=\left[(1-C_1\epsilon)/(\gamma C_2)\right]^{1/(\gamma-2)}.
	$$ 
	Thus, for any $E\in [0,E^k_\lambda)$,
		$$
	\inf_{w\in S_{r_\lambda^k}^+}\mathcal{R}^E(w)\;\geq\;\inf_{w\in S_{r_\lambda^k}^+}q\frac{f(r_\lambda^k)-E}{|w|_{L^q}^q}\;\geq \;\;q\frac{E^k_\lambda-E}{S_q^q (r_\lambda^k)^q}\;\;=:\;\;\delta_{E}> 0, 
$$ 
which implies the proof, since $\mathcal{R}^E_\rho(w)=\mathcal{R}^E(w)>0$, $w \in S_{r_\lambda^k}^+$ if $\rho \in (0,r_\lambda^k)$.
\end{proof}
\begin{proposition}\label{p-infty}
For any $ u \in W\setminus 0$ and  $r>0$, there holds $\mathcal{R}^E(tu+v)\to -\infty$ as $t\to +\infty$ uniformly for $v \in B_r$.
		\end{proposition}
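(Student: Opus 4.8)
The plan is to expand $\mathcal{R}^E(tu+v)$ and track the leading-order behavior in $t$. Fix $u \in W\setminus 0$ and $r>0$; write $w_t := tu+v$ with $v \in B_r$. The numerator of $\mathcal{R}^E(w_t)$ is $\frac{1}{2}H_\lambda(w_t) - \int G(x,w_t)\,dx - E$. Expanding the quadratic form, $\frac12 H_\lambda(w_t) = \frac{t^2}{2}H_\lambda(u) + t\,\langle u,v\rangle_\lambda + \frac12 H_\lambda(v)$, where $\langle\cdot,\cdot\rangle_\lambda$ is the bilinear form associated to $H_\lambda$; on $B_r$ the last two terms are bounded by $C(r)(1+t)$. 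So the numerator is at most $\frac{t^2}{2}H_\lambda(u) + C(r)(1+t) - \int G(x,w_t)\,dx - E$. The key point is to use $(A^3)$: since $G(x,s)>0$ everywhere and $\alpha G(x,s)\le g(x,s)s$ for $|s|\ge R_0$, a standard integration argument gives a lower bound $G(x,s)\ge c_1|s|^\alpha - c_2$ for a.e. $x$ and all $s\in\mathbb R$, with $c_1>0$, $\alpha>2$. Hence $\int G(x,w_t)\,dx \ge c_1 |w_t|_{L^\alpha}^\alpha - c_2|\Omega|$.

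Next I would control $|w_t|_{L^\alpha}$ and $|w_t|_{L^q}$ from below and above, respectively, uniformly in $v\in B_r$. Since $\|v\|_1\le r$ implies $|v|_{L^\alpha}\le C_\alpha r$ and $|v|_{L^q}\le C_q r$ by Sobolev embedding (recall $2<\alpha<2^*$ follows from $\alpha>2$ together with $(A^2)$, as $\gamma\in(2,2^*)$ and the bound on $G$ propagates; if $\alpha\le 2^*$ this is immediate, and one may in any case replace $\alpha$ by $\min(\alpha,\gamma)>2$ in the lower bound for $G$). By the triangle inequality $|w_t|_{L^\alpha}\ge t|u|_{L^\alpha} - r C_\alpha$, so for $t$ large $|w_t|_{L^\alpha}^\alpha \ge \frac{1}{2}t^\alpha |u|_{L^\alpha}^\alpha$ provided $u$ is not a.e.\ zero, which holds since $u\ne 0$. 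Similarly $|w_t|_{L^q}^q \le (t|u|_{L^q}+rC_q)^q \le C(r)(1+t^q)$. Therefore
$$
\mathcal{R}^E(w_t) \le q\,\frac{\tfrac{t^2}{2}H_\lambda(u) + C(r)(1+t) - \tfrac{c_1}{2}t^\alpha |u|_{L^\alpha}^\alpha + c_2|\Omega| - E}{|w_t|_{L^q}^q}.
$$
Since $\alpha>2$, the numerator is dominated by the term $-\tfrac{c_1}{2}t^\alpha|u|_{L^\alpha}^\alpha \to -\infty$, and in particular it is negative for $t$ large; once the numerator is negative we may use the upper bound $|w_t|_{L^q}^q \le C(r)(1+t^q)$ in the denominator to conclude $\mathcal{R}^E(w_t) \le -c\, t^{\alpha-q}/(1+t^q) \cdot (\text{bounded below away from }0) \to -\infty$, all bounds being uniform in $v\in B_r$.

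The main obstacle is the case $H_\lambda(u)>0$ (i.e.\ $u$ has a nontrivial $W^+$ component), where the quadratic term competes with $t^\alpha$: here it is essential that $\alpha>2$ so that the superquadratic growth of $G$ forced by $(A^3)$ beats the quadratic form. The one technical care needed is the uniformity: all error terms must be estimated by $C(r)$ times powers of $t$ strictly smaller than $\alpha$, which is immediate since $v$ ranges over the bounded set $B_r$ and the cross terms $\langle u,v\rangle_\lambda$, $\int$ of $G$-derivative type pairings against $v$, etc., are linear or at worst of order $t^{\gamma-1}$ or $t^{\alpha-1}$ in $t$ — still subordinate to $t^\alpha$. (Here one uses $(A^2)$ to bound $|\int (G(x,w_t)-G(x,tu))\,dx|$ by $C(r)(1+t^{\gamma-1})$ via the mean value theorem, should one prefer to expand around $tu$ rather than bound $G(x,w_t)$ directly from below.) Once the leading $-t^\alpha$ term is isolated and all remainders are shown to be $o(t^\alpha)$ uniformly in $v\in B_r$, the conclusion follows.
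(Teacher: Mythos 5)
Your overall strategy is essentially the paper's: use $(A^3)$ to extract superquadratic growth of $G$, so that the term $-\int G(x,tu+v)\,dx$, of order $t^{\alpha}$, dominates the quadratic form $H_\lambda(tu+v)=O(t^2)$, while the denominator $|tu+v|_{L^q}^q$ grows only like $t^{q}$ with $q<2<\alpha$. Your treatment of the cross terms, the upper bound $|tu+v|_{L^q}^q\le C(r)(1+t^q)$, the sign/denominator manipulation, the remark that one may replace $\alpha$ by $\min(\alpha,\gamma)>2$, and the uniformity in $v\in B_r$ are all fine.

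There is, however, one step that the hypotheses do not justify and which is false in general: the claim that $G(x,s)\ge c_1|s|^{\alpha}-c_2$ with a constant $c_1>0$ independent of $x$. Integrating the differential inequality in $(A^3)$ gives $G(x,s)\ge R_0^{-\alpha}G\bigl(x,R_0\operatorname{sign} s\bigr)|s|^{\alpha}$ for $|s|\ge R_0$, and $(A^3)$ only asserts $G(x,\pm R_0)>0$ for a.e.\ $x$, not $\operatorname{ess\,inf}_x G(x,\pm R_0)>0$. For example, $G(x,u)=d(x)|u|^{\alpha}$ with $d(x)=\operatorname{dist}(x,\partial\Omega)$ and $2<\alpha\le\gamma$ satisfies $(A^1)$--$(A^3)$, yet no uniform $c_1>0$ exists, since $d$ is positive a.e.\ but not bounded below. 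This is precisely why the paper works with an $x$-dependent weight: $G(x,s)\ge c(x)|s|^{\alpha}-C_0$ with $c\in L^{\infty}(\Omega)$, $c>0$ a.e., and the leading coefficient in the limit is $\int c(x)|u|^{\alpha}dx$, which is strictly positive because $u\ne 0$ and $c>0$ a.e.\ (your bound $|w_t|_{L^\alpha}^\alpha\ge\tfrac12 t^\alpha|u|_{L^\alpha}^\alpha$ with an unweighted norm does not survive the correction as stated). The repair is local and follows the paper's route: replace $|w_t|_{L^{\alpha}}$ by the weighted norm $\bigl(\int c(x)|w_t|^{\alpha}dx\bigr)^{1/\alpha}$, apply the triangle inequality in $L^{\alpha}(c\,dx)$ (the $v$-contribution is bounded by $C(r)$ using $c\in L^{\infty}$ and $\alpha\le\gamma<2^{*}$), and conclude $\int G(x,w_t)\,dx\ge \tfrac12 t^{\alpha}\int c(x)|u|^{\alpha}dx-C$ for large $t$, uniformly in $v\in B_r$; after this substitution the rest of your computation goes through unchanged.
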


\begin{proof}
	Observe that  $(A^3)$ implies  $G(x,s)\!\geq\! c(x)|s|^{\alpha}\!>\!0$ a.e. in $\Omega$ and  $|s|\!\geq\! R_0$, with some $c\!\in\! L^\infty(\Omega)$. Hence  by $(A^2)$ we have $\displaystyle\; G(x,s)\;\geq\;c(x)|s|^{\alpha}-C_0$, $s \in \mathbb{R}$ and a.e $x \in \Omega$ for some constant $C_0 \in \mathbb{R}$. Note that $c^{1/\alpha}(x)\left(u(x)+v(x)/t\right)\to  c^{1/\alpha}(x)u(x)\;\; in \;\;L^{\alpha}(\Omega)$ uniformly in $v\in B_r$ as $t\to +\infty$. Indeed, using the Sobolev inequality we have
	$$
	\left|\int c(x)\left|u+\frac{v}{t}\right|^{\alpha}\,dx-\int c(x)|u|^{\alpha}\right|\,dx\leq \frac{1}{t}\int c(x)\left|v\right|^{\alpha}\,dx\leq \frac{1}{t}Cr^\alpha, ~~v \in B_r
	$$
	for some constant $C$ which does not depend on $v \in  B_r$. Thus, uniformly in $v\in B_r$,
	$$
	\lim\limits_{t\to\infty}\frac{1}{t^{\alpha}}\displaystyle \int G(x,tu+v)\,dx\;\;\;\geq\;\;\;\lim\limits_{t\to\infty}\left(\int c(x)\left|u+\frac{v}{t}\right|^{\alpha}dx\;\;\;-\;\;\;\frac{C_0|\Omega|}{t^{\alpha}}\right)\;\;=\;\;\int c(x)\left|u\right|^{\alpha}dx. \;\;
	$$
	This implies that $\lim\limits_{t\to\infty}\frac{1}{t^{\alpha}}\left(\|tu+v\|_W^2-\displaystyle\; \int G(x,tu+v)\,dx\right)\;\;\leq\;\;-\int c(x)\left|u\right|^{\alpha}dx\;\;<\;\;0,$  uniformly in $v\in B_r.$ Since $|u+v/t|^q_{L^q}\leq C\left(\;\|u\|^q_q+S^q_q r^q/t^q\right)$ and  $\alpha>2>q$, we conclude that uniformly for $v \in S_r$ there holds
	\begin{align}\label{struw1}\lim\limits_{t\to\infty}\mathcal{R}^E(tu+v)\;\leq\;\lim\limits_{t\to\infty}\frac{t^{\alpha-q}}{\left|\left|u+\frac{v}{t}\right|\right|_q^q}\left[\frac{\|tu+v\|_W^2}{2t^{\alpha}}-\int \frac{G(x,tu+v)}{t^{\alpha}}\,dx\right]\;\;=\;\;-\infty.
	\end{align}
\end{proof}
\begin{proposition}\label{convergence}
Assume that $E\in (0,E^k_{\lambda})$, $0<\rho<r_k^{\lambda}$. The functional $\mathcal{R}_{\rho}^E$ satisfies the $(Ce)$ condition at any level $\mu > 0$. 
\end{proposition}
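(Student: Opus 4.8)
The plan is to reduce, via Corollary~\ref{Cc}, to checking the $(Ce)_\mu$ condition for $\mathcal{R}^E$ itself at an arbitrary level $\mu>0$ (for $\rho<\rho(E)$; the remaining values of $\rho$, if $\rho(E)<r^k_\lambda$, are handled by the same scheme applied directly to $\mathcal{R}^E_\rho$, which coincides with $\mathcal{R}^E$ on $\{\|u\|_1>\rho\}$). So let $(u_n)\subset W\setminus\{0\}$ satisfy $\mu_n:=\mathcal{R}^E(u_n)\to\mu>0$ and $(1+\|u_n\|_W)\|D\mathcal{R}^E(u_n)\|_{\ast}\to0$; in particular $D\mathcal{R}^E(u_n)(\varphi_n)\to0$ whenever $\|\varphi_n\|_W\le\|u_n\|_W$. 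Recalling that
\[
D\mathcal{R}^E(u)(\varphi)=\frac{q}{|u|^q_{L^q}}\Big(H_\lambda(u,\varphi)-\int g(x,u)\varphi\,dx-\mathcal{R}^E(u)\int|u|^{q-2}u\varphi\,dx\Big),\quad H_\lambda(u,\varphi):=\int\nabla u\cdot\nabla\varphi\,dx-\lambda\int u\varphi\,dx,
\]
the scheme is: (I) show $(u_n)$ is bounded in $W$; (II) extract a strongly convergent subsequence.

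For Step~(I) I would argue by contradiction through rescaling, since the quotient form of $\mathcal{R}^E$ precludes the classical trick of combining $E_\mu(u_n)$ with $DE_\mu(u_n)(u_n)$, and $H_\lambda$ gives no coercivity. Suppose $t_n:=\|u_n\|_1\to\infty$ and put $w_n:=u_n/t_n$; passing to a subsequence, $w_n\rightharpoonup w$ in $W$, $w_n\to w$ in every $L^r$ with $r<2^*$ and a.e., and --- here $\dim W^-=k<\infty$ is essential --- $w_n^-\to w^-$ strongly in $W$, so $\|w_n^+\|_1^2\to1-\|w^-\|_1^2$ and $H_\lambda(w_n)\to L:=1-2\|w^-\|_1^2$. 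Dividing $\tfrac12 H_\lambda(u_n)-\int G(x,u_n)\,dx-E=\tfrac{\mu_n}{q}|u_n|^q_{L^q}$ by $t_n^2$ and using $q<2$ (so $t_n^{q-2}|w_n|^q_{L^q}\to0$) gives $t_n^{-2}\int G(x,u_n)\,dx\to L/2$; taking $\varphi_n=u_n$ above, so that $H_\lambda(u_n)-\int g(x,u_n)u_n\,dx-\mu_n|u_n|^q_{L^q}=o(|u_n|^q_{L^q})$, and dividing again by $t_n^2$ gives $t_n^{-2}\int g(x,u_n)u_n\,dx\to L$. Now $(A^3)$ (absorbing the bounded contribution of $\{|u|<R_0\}$ into a constant) yields $\alpha\int G(x,u_n)\,dx\le\int g(x,u_n)u_n\,dx+C$, which upon dividing by $t_n^2$ and letting $n\to\infty$ forces $\tfrac{\alpha}{2}L\le L$, hence $L\le0$; but $G\ge0$ forces $L\ge0$, so $L=0$, $\|w^-\|_1^2=\tfrac12$ and $t_n^{-2}\int G(x,u_n)\,dx\to0$. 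Since $\|w^-\|_1^2=\tfrac12$ we have $w\ne0$; integrating $(A^3)$ gives $G(x,s)\ge h(x)(|s|/R_0)^\alpha$ for $|s|\ge R_0$ with $h:=\min\{G(\cdot,R_0),G(\cdot,-R_0)\}>0$ a.e., and since $|u_n|=t_n|w_n|\to\infty$ a.e. on $\{w\ne0\}$, the lower bound $t_n^{-2}\int G(x,u_n)\,dx\ge t_n^{\alpha-2}R_0^{-\alpha}\int_{\{w\ne0\}\cap\{|u_n|\ge R_0\}}h|w_n|^\alpha\,dx$ tends to $+\infty$ by $\alpha>2$ and Fatou's lemma --- a contradiction. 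Hence $(u_n)$ is bounded.

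For Step~(II), pass to $u_n\rightharpoonup u$ in $W$, $u_n\to u$ in every $L^r$ with $r<2^*$ and a.e.; then $D(u_n):=\tfrac1q|u_n|^q_{L^q}$ is bounded, so $\|DE_{\mu_n}(u_n)\|_{\ast}=D(u_n)\|D\mathcal{R}^E(u_n)\|_{\ast}\to0$. I would first rule out $|u_n|_{L^q}\to0$: if it held, then $u_n\to0$ in all such $L^r$, so $\int G(x,u_n)\,dx\to0$ and $\int g(x,u_n)u_n\,dx\to0$; from $DE_{\mu_n}(u_n)(u_n)\to0$ this gives $H_\lambda(u_n)\to0$ and hence $\|u_n\|_W\to0$, so $\tfrac12 H_\lambda(u_n)-\int G(x,u_n)\,dx-E\to-E$, contradicting that this quantity equals $\mu_n D(u_n)\to0$ --- this is precisely where $E>0$ enters. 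Thus $\liminf|u_n|_{L^q}>0$, so $u\ne0$ and $D(u_n)$ stays bounded away from $0$. Finally, $DE_{\mu_n}(u_n)(u_n-u)\to0$; expanding, $H_\lambda(u_n,u_n-u)=\|u_n\|_W^2-\|u\|_W^2+o(1)$ by weak convergence and the compact embedding $W\hookrightarrow L^2$, while $\int g(x,u_n)(u_n-u)\,dx\to0$ and $\mu_n\int|u_n|^{q-2}u_n(u_n-u)\,dx\to0$ by $(A^2)$ and strong $L^\gamma$ and $L^q$ convergence; therefore $\|u_n\|_W^2\to\|u\|_W^2$, which together with $u_n\rightharpoonup u$ yields $u_n\to u$ in $W$. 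By Corollary~\ref{Cc} the same conclusion then holds for $\mathcal{R}^E_\rho$.

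The main obstacle is Step~(I): the quotient structure kills the standard Ambrosetti--Rabinowitz boundedness argument, and the indefiniteness of $H_\lambda$ provides no a priori control of $\|u_n\|_W$; the contradiction closes only because $\dim W^-<\infty$ pins down $\|w_n^-\|_1$ along the rescaled sequence and $(A^3)$ forces $G$ to grow super-quadratically, which is incompatible with $t_n^{-2}\int G(x,u_n)\,dx$ remaining finite once the rescaled weak limit is nonzero.
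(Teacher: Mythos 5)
Your argument is correct, and the final step (strong convergence of the bounded sequence via $\|DE_{\mu_n}(u_n)\|_{\ast}=\tfrac1q|u_n|^q_{L^q}\|D\mathcal{R}^E(u_n)\|_{\ast}\to0$ and testing with $u_n-u$) coincides with the paper's use of the $S^+$ property; but your proof of boundedness is genuinely different from the paper's. The paper never rescales: it forms the single combination $(\alpha-q)\mathcal{R}^E(u_m)-D\mathcal{R}^E(u_m)(u_m)$, which by $(A^3)$ yields $H_\lambda(u_m)\le c_0(1+|u_m|^q_{L^q})$, hence $\|u_m\|^2_W\le\lambda|u_m|^2_{L^2}+c_0(1+|u_m|^q_{L^q})$, and then runs a dichotomy on $|u_m|_{L^2}$: if it blows up, then $H_\lambda(u_m)/|u_m|^2_{L^2}\lesssim0$ and, since $G\ge0$ and $E\ge0$, the quotient $\mathcal{R}^E(u_m)$ is forced to have nonpositive limsup, contradicting the positive level $\mu$. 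So your remark that the quotient structure ``precludes'' the classical Ambrosetti--Rabinowitz combination is not quite right --- the paper adapts exactly that trick to $\mathcal{R}^E$, and gets boundedness in a few lines, with $q<2$ and the positivity of the level doing the work. Your route instead normalizes $w_n=u_n/t_n$, exploits $\dim W^-<\infty$ to identify $L=\lim H_\lambda(w_n)=1-2\|w^-\|_1^2$, extracts the two limits $t_n^{-2}\int G\to L/2$ and $t_n^{-2}\int g(x,u_n)u_n\to L$ from the level and the Cerami bound, and closes with $(A^3)$ plus the integrated superquadratic lower bound $G(x,s)\ge h(x)(|s|/R_0)^\alpha$ and Fatou. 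This is longer but more quantitative: it pinpoints where a hypothetical unbounded Cerami sequence would have to concentrate (namely $\|w^-\|_1^2=\tfrac12$) and uses the full strength of the superquadratic growth, whereas the paper only needs the one-sided inequality $\alpha G\le gu$ and $G\ge 0$; your version is also the more robust template if $W^-$ were infinite dimensional the paper's dichotomy would survive but your strong convergence of $w_n^-$ would not. Two small points: your explicit exclusion of $|u_n|_{L^q}\to0$ (using $E>0$) is where your proof genuinely needs $E>0$, and it is in fact a cleaner justification of the paper's Remark \ref{rem:nonzero} than the one given there; and your opening reduction via Corollary \ref{Cc} needs $\rho<\rho(E)$, while the statement allows any $\rho<r^k_\lambda$ --- you flag this and the paper's own proof silently makes the same identification of $\mathcal{R}^E_\rho$ with $\mathcal{R}^E$ outside $B_\rho$ (its claim that $\mathcal{R}^E_\rho=0$ on $B_\rho$ is literally true only on $B_{\rho/2}$), so this is a shared imprecision rather than a gap specific to your argument, and it is harmless for the proof of Theorem \ref{thm1}, where $\rho<\min\{r^k_\lambda,\rho(E)\}$.
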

\begin{proof} 
Assume that $(u_m)$ is a $(Ce)$ sequence, i.e.,  $\mu_m\!:=\!\mathcal{R}^E_{\rho}(u_m)\!\to \! \mu>0$ and
$\|D\mathcal{R}_{\rho}^E(u_m)\|_*(1+\|u_m\|_1)\! \to \! 0$ as $m \to +\infty$.
Since, $\mathcal{R}_{\rho}^E(u)\!=\!0$ on $B_{\rho}$ and $\mu\!>\!0$, we may assume that $\|u_m\|_W\!>\!\rho$ and $\mathcal{R}_{\rho}^E(u_m)\!=\!\mathcal{R}^E(u_m)$, $\forall m$.   Since by $(A^3)$, $  \int (g(x,u)u-\alpha G(x,u))>0$, $u \in W \setminus 0$, we have
\begin{align*}
	\mu_m(\alpha-q)+o(1)\|u_m\|_1(1+\|u_m\|_1)^{-1}\;=\;(\alpha-q) \mathcal{R}^E(u_m)&-D\mathcal{R}^E(u_m)(u_m)\geq\\
	&\frac{q}{|u_m|_{L^q}^q}\left[\frac{\alpha-2}{2}H_\lambda(u_m)-E\alpha\right].
\end{align*}
Hence $H_\lambda(u_m)\;\;\leq \;\;c_0\;(1+|u_m|_{L^q}^q)$, where $0<c_0<+\infty$ does not depend on $m=1,2,\ldots $, and therefore
\begin{equation}\label{estimativeCE2}
\|u_m\|_W^2\;\;\;\leq\;\;\;\lambda|u_m|_{L^2}^2\;\; +\;\;c_0\;(1+|u_m|_{L^q}^q),~m=1,2,\ldots.
\end{equation}
Thus, if $|u_m|_{L^2}$ is bounded, then $\|u_m\|_W$ is also bounded. 
If $|u_m|_{L^2}\to\infty$, then by \eqref{estimativeCE2} 
$\displaystyle{ \lim_{m\to\infty}\frac{H_\lambda(u_m)}{|u_m|_{L^2}^2}\;\leq\;0}$, and consequently, we get a contradiction
$$
0\;\;<\;\;c\;=\;\limsup\limits_{m\to\infty}\mathcal{R}^E(u_m)\;\;\;=\;\;\;\limsup\limits_{m\to\infty} q\frac{|u_m|_{L^2}^2}{|u_m|_{L^q}^q}\left[\frac{1}{2}\frac{H_\lambda(u_m)}{|u_m|_{L^2}^2}-\int\frac{ G(x,u_m)}{|u_m|_{L^2}^2}\,dx-\frac{E}{|u_m|_{L^2}^2}\right]\;\;\leq\;\;0.
$$
Thus $(u_m)$ is bounded and we may assume that
$u_m \rightharpoonup u$ weakly in $W$ and strongly in $L^r(\Omega)$, $r\in(1,2^*),$ as $m \to \infty$. In particular, this and $(A^2)$ give 
\begin{equation}\label{Gc}
	\int G(x,u_m) dx \to \int G(x,u) dx, ~~\|u_m\|^q_{q} \to \|u\|^q_{q} ~~\mbox{as}~~m \to +\infty.
\end{equation}By the convergence $\|D\mathcal{R}^E(u_m)\|\! \to\!  0$  we get $\left<\!D\mathcal{R}^E(u_m), u-u_m \!\right>\!\to\! 0$ as $m\to +\infty$. From the \eqref{Gc} and $S^+$ property  (see \cite{drabek})  we obtain that $ \left<-\Delta u_m, u-u_m \right> \to 0 $ and $u_m \to u$ strongly in $W$. 
\end{proof}
\begin{remark}\label{rem:nonzero} 
		Since  $\|u_m\|_1\!>\!\rho$, $\forall m$ and $\mathcal{R}^E_{\rho}(u_m)\!\to \! \mu \in (0,+\infty)$ it follows that  $|u_m|_{L^q}\not\to 0$.
\end{remark}

\subsection{The proof of Theorem \ref{thm1}}
Let $\lambda\!\in\! (\lambda_k,\lambda_{k+1})$, $E\!\in\!(0,E^k_\lambda)$  and $0<\rho<r_k^{\lambda}$. For $T>r_\lambda^k,$ take $\bar{u}^+ \in S^+_1$, and define 
\begin{align*}
	&B_o:=B_0(T)= \{u=t\bar{u}^+ + sv:~  v \in S^-_1,~ (0<t<T,~s= T)~\mbox{or}~(t \in \{0,T\},~0\leq s\leq  T)\},\\
	&B:= B(T)= \{u=t\bar{u}^+ + sv:~  v \in S^-_1,~0<t<T,~ 0\leq s\leq T\}, 
\end{align*} 
and
\begin{align*}
	&B_0^c:=B_0^c(T)\!:=\!\{u=t\bar{u}^++ sv:~\!  v\! \in\! S^-_1\!,~~ (0<t<T,~s= T)\},
	\\
	&B_0^d:=B_0^d(T)\!:=\!\{u=t\bar{u}^+ +  sv:~  v \in S^-_1\!,~~ t\! \in \{0,T\},~0\leq s\leq  T\}. 
	\end{align*}
Observe that $H_\lambda(u)= t^2\|\bar{u}^+\|_1^2-T^2\|v\|_1^2=(t^2-T^2)<0$ for $T>t$ and $u=t\bar{u}^+ +  Tv$, $u^+ \in S^+_1$, $v \in S^-_1$. This implies
$$
b^c(T):=\sup_{u \in B_0^c}\mathcal{R}^E_\rho(u)= \sup_{u \in B_0^c}\phi_\rho(\|u\|_1) \frac{\frac{1}{2}H_\lambda(u)-\int G(x,u)\,dx-E}{\frac{1}{q}|u|_{L^q}^q}\leq 0,~~\forall \rho>0. 
$$
 Proposition \ref{p-infty} implies that
$b^d(T):=\sup\limits_{u \in B_0^d}\mathcal{R}^E_\rho(u) \leq 0$, for sufficiently large $T>r_\lambda^k$. Thus,  by Proposition \ref{PositRHO},  for any $\rho\in (0,r_\lambda^k)$ and for sufficiently large $T>r_\lambda^k$ there holds
$$
b:=\sup_{u \in B_0}\mathcal{R}^E_\rho(u)\leq 0< \inf_{u \in S_{r_\lambda^k}^+}\mathcal{R}^E_\rho(u)=:a.
$$
Let $\lambda\!\in\! (\lambda_k,\lambda_{k+1})$, $E\!\in\!(0, E^k_\lambda)$ and $0<\rho<\min\{r_k^{\lambda},\rho(E)\}$. Consider
\begin{equation}\label{LVar}
\mu^k_\lambda(E):=\inf_{h \in \Gamma}\max_{u \in B}\mathcal{R}^E_\rho(h(u)),	
\end{equation}
where $\Gamma = \{h \in C(B; W) :~ h|_{B_0}=id_{B_0}\}$. 
By Propositions \ref{convergence} and  Corollary \ref{Cc} the functional $\mathcal{R}^E_\rho$ satisfies the $(Ce)$ condition at the level $c=\mu^k_\lambda(E)>0$. Moreover, $B_0\cap  S_{r_\lambda^k}^+ =\emptyset$ and $\{B_0,B\}$  links $S_{r_\lambda^k}^+$ in $W$ (see \cite{motreanu}), $
d(B_0, S_{r_\lambda^k}^+)>0$, and 
$S_{r_\lambda^k}^+$ is closed in $W$. 
 Hence, by the  Benci \& Rabinowitz Linking Theorem  \cite{BencRab}  for functionals satisfying $(Ce)$ condition (see \cite[Theorem 5.39]{motreanu}), there exists a nonzero critical point $u_{\mu^k_\lambda(E)}\in W$ of the functional $\mathcal{R}^E_\rho$ such that $\mathcal{R}^E_\rho(u_{\mu^k_\lambda(E)}) = \mu^k_\lambda(E)\geq a > 0$. Consequently, Corollary \ref{rho} and \eqref{R1} yields that $u_{\mu^k_\lambda(E)}$ is a weak solution of  \eqref{p} with $\mu=\mu^k_\lambda(E)$ and energy value $E$, i.e., $DE_{\mu^k_\lambda(E)}(u_{\mu^k_\lambda(E)})=0$, $E_{\mu^k_\lambda(E)}(u_{\mu^k_\lambda(E)})=E$.
Thus we have proved the first part of the theorem. 

\textit{Let us show (i). 
}Take $E_1>E_0>0$. It is not hard to see that the  sets $\{B_0,B\}$ and  the path sets $\Gamma$ in \eqref{LVar} can be taken the same for $E_1,E_0$ if $|E_1- E_0|$ is sufficiently small.
Note that
	$$
	\mathcal{R}^{E_1}_\rho(u)=\mathcal{R}^{E_0}_\rho(u)-\phi_\rho(\|u\|_1)\frac{E_1-E_0}{\int G(x,u)\,dx},~~\forall u \in W\setminus 0,
	$$
	and thus for sufficiently small $|E_1- E_0|$,
		\begin{equation*}
		\max_{u \in B}\mathcal{R}^{E_1}_\rho(h(u))= \max_{u \in B}\left(\mathcal{R}^{E_0}_\rho(h(u))-\phi_\rho(\|h(u)\|_1)\frac{E_1-E_0}{\int G(x,h(u))\,dx}\right)\leq \max_{u \in B}\mathcal{R}^{E_0}_\rho(h(u)), 
	\end{equation*}
$\forall h \in \Gamma$, and therefore, $
\mu^k_\lambda(E_1)=\inf_{h \in \Gamma}\max_{u \in B} \mathcal{R}^{E_1}_\rho(h(u))\leq  \inf_{h \in \Gamma}\max_{u \in B}\mathcal{R}^{E_0}_\rho(h(u))=\mu^k_\lambda(E_0)$.
\medskip

{\it Now we prove (ii).} Let $E=0$. Consider $\mathcal{R}^0(u)\equiv \mathcal{R}^E(u)|_{E=0}$. Using \eqref{CrescimentoG} and $1 < q < 2$ it is not hard to show that $\mathcal{R}^0(u) \to 0$ as $\|u\|_1 \to 0$. Consequently, by the continuation we can set  that $\mathcal{R}^0(0)=0$.

Assume that $\lambda<\lambda^1$. Then $W^-=\emptyset$ and $W^+\equiv W$. By Proposition \ref{p-infty}, one can find $u_1 \in W$ such that $\mathcal{R}^E(u_1)<0$. 
Let $E\in [0,E^0_\lambda)$ and $0<\rho<\min\{r_0^{\lambda},\rho(E)\}$. Observe that \eqref{LVar} can be rewritten as follows 
\begin{equation}\label{MPass}
\mu_\lambda^0(E):=\inf_{\gamma \in \Gamma}\max_{t \in [0,1]}\mathcal{R}^E_\rho(\gamma(t)),	
\end{equation}
where $\Gamma = \{\gamma \in C([0,1]; W) :~ \gamma(0)=0, ~\gamma(1)=u_1\} $, $0<\rho<\rho(E)$. Here we set $\mathcal{R}^0_\rho(u):=\mathcal{R}^0(u)$, $\rho>0$.  

Note that by the above, for any $E\in (0,E^0_\lambda)$ and $0<\rho<\min\{r_0^{\lambda},\rho(E)\}$, $\mu_\lambda^0(E)>0$ and there exists a critical point $u_{\mu_\lambda^0(E)} \in W\setminus 0$ of $\mathcal{R}^E(u)$ such that $DE_{\mu_\lambda^0(E)}(u_{\mu_\lambda^0(E)})=0$ and $E_{\mu_\lambda^0(E)}(u_{\mu_\lambda^0(E)})=E$.
As  in the proof of (i), from \eqref{MPass} it follows that $\mu_\lambda^0(E)$ is a nonincreasing function on $E \in [0,E^0_\lambda)$. Moreover,  $\mu_\lambda^0(E)\leq \mu_\lambda^0(0)<+\infty$, for any $E \in  (0,E^0_\lambda)$. Hence there exists  $\lim_{E \to 0}\mu_\lambda^0(E)=
\bar{\mu}_\lambda(0) \leq \mu_\lambda^0(0)$. Furthermore,  $\bar{\mu}_\lambda(0)>0$ since $\mu_\lambda^0(E)>0$, $E \in\!(0,E^k_\lambda)$ and  $\mu_\lambda^0(E)$ is a nonincreasing function. 
	
Since $D\mathcal{R}^E(u_{\mu_\lambda^0(E)})=0$ and $\mu_\lambda^0(E)\equiv \mathcal{R}^E(u_{\mu_\lambda^0(E)})\to\bar{\mu}_\lambda(0)>0$, any countable subset of $(u_{\mu_\lambda^0(E)})_{E \in (0,E^0_\lambda)}$ is  a $(Ce)$ sequence. Hence Proposition \ref{convergence} implies that there exists a sequences $u_{\mu_\lambda(E_m)}$, $m=1,2,\ldots $,  such that $\lim_{m\to +\infty}E_m=0$ and  $u_{\mu_\lambda(E_m)}$ convergences in $W$ to some point $u_{\bar{\mu}_\lambda(0)} \in W$ as $m\to +\infty$. Note that  $u_{\bar{\mu}_\lambda(0)} \neq 0$ (see Remark \ref{rem:nonzero} ), and therefore   $u_{\bar{\mu}_\lambda(0)}$ is a weak solution of \eqref{p} with $\mu=\bar{\mu}_{\lambda(0)}$. Moreover, 
$$
0=\lim_{m\to +\infty}E_m =\lim_{m\to +\infty}E_\mu(u_{\mu_\lambda(E_m)})= E_\mu(u_{\bar{\mu}_\lambda(0)}).
$$
Thus $u_{\bar{\mu}_\lambda(0)}$ is a solution with zero energy.
This ends the proof of the theorem.

\section{Acknowledgements}
The first author was supported by  RSF grant No. 22-21-00580.

%

\end{document}